\newtheorem{theorem}{Theorem}
\newtheorem{lemma}{Lemma}
\newtheorem{proposition}{Proposition}
\newtheorem{corollary}[proposition]{Corollary}
\theoremstyle{remark}
\newtheorem{remark}{Remark}
\theoremstyle{definition}
\newtheorem{definition}{Definition}
\def\R{\mathbb{R}}
\def\N{\mathbb{N}}
\begin{document}

\begin{frontmatter}

% "Title of the paper"
\title{Data-driven goodness-of-fit tests}
\runtitle{Data-driven tests}

\thankstext{t2}{Financial support of the Deutsche
Forschungsgemeinschaft GK 1023 "Identifikation in Mathematischen
Modellen" is gratefully acknowledged.}

\begin{aug}
\author{\fnms{Mikhail}
\snm{Langovoy}\thanksref{t2}\ead[label=e1]{mikhail.langovoy@epfl.ch}}

\affiliation{
         Machine Learning and Optimization Laboratory, EPFL, \\
         Lausanne, Switzerland.}

\address{Machine Learning and Optimization Laboratory, \\
EPFL IC, INJ 339, Station 14, \\
CH-1015  Lausanne, Switzerland\\
\printead{e1}\\}

\runauthor{M. Langovoy}
\end{aug}

\begin{abstract}
We propose and study a general method for construction of consistent statistical tests on the basis of possibly indirect, corrupted, or partially available observations. The class of tests devised in the paper contains Neyman's smooth tests, data-driven score tests, and some types of multi-sample tests as basic examples. Our tests are data-driven and are additionally incorporated with model selection rules. The method allows to use a wide class of model selection rules that are based on the penalization idea. In particular, many of the optimal penalties, derived in statistical
literature, can be used in our tests. We establish the behavior of model selection rules and data-driven tests under both the null hypothesis and the alternative hypothesis, derive an explicit detectability rule for alternative hypotheses, and prove a master consistency theorem for the tests from the class. The paper shows that the tests are applicable to a wide range of problems, including hypothesis testing in statistical inverse problems, multi-sample problems, and nonparametric hypothesis testing.\\
\end{abstract}

\begin{keyword}[class=MSC]
\noindent \textbf{MSC subject classification: $\quad$}
\kwd[Primary ]{62G10} \kwd[; secondary ]{60F10} \kwd{62H10}
\kwd{62E20}
\end{keyword}

\begin{keyword}
\kwd{Data-driven test} \kwd{model selection} \kwd{large deviations} \kwd{asymptotic consistency}   \kwd{generalized likelihood} \kwd{random quadratic form} \kwd{statistical inverse problems} \kwd{stochastic processes}
\end{keyword}

\end{frontmatter}

\section{Introduction}\label{GeneralTheorySection1}

%\subsection{State of the art}

Constructing good tests for statistical hypotheses is an essential
problem of statistics. There are two main approaches to constructing
test statistics. In the first approach, roughly speaking, some
measure of distance between the theoretical and the corresponding
empirical distributions is proposed as the test statistic. Classical
examples of this approach are the Cramer-von Mises and the
Kolmogorov-Smirnov statistics (\cite{Kolmogorov_Test_1933}). More generally, $L^p-$distance based
tests, as well as graphical tests based on confidence bands, usually
belong to this type, and so do kernel tests (see \cite{Dette_2002_Kernel_Test}) and RKHS-based kernel tests that recently became especially popular in machine learning (see, e.g., \cite{Sejdinovic_2013_Equivalence}, \cite{Zaremba_2013_B-test}, \cite{Chwialkowski_2016_Kernel_Test}). Although these tests are capable of giving very good results, each of these tests is asymptotically
optimal only in a finite number of directions of alternatives to a
null hypothesis (see \cite{MR1335235}).

Nowadays, there is an increasing interest to the second approach of
constructing test statistics. The idea of this approach is to
construct tests in such a way that the tests would be asymptotically
optimal in some sense, or most powerful, at least in a reach enough
set of directions. Test statistics constructed following this
approach are often called \emph{score test} statistics. The pioneer of this
approach was \cite{Neyman1937}. See also, for example,
\cite{0018.32003}, \cite{MR0084918}, \cite{MR0112201},
\cite{MR1226715}, \cite{MR1421157} for subsequent developments and
improvements, and \cite{1029.62042}, \cite{MR2281882} and \cite{liliang} for recent results in the field.
This approach is also closely related to the theory of efficient
(adaptive) estimation - \cite{MR1245941}, \cite{MR620321}.
Additionally, it was shown, at least in some basic situations, that
data-driven score tests are asymptotically optimal in the sense of
intermediate efficiency in an infinite number of directions of
alternatives (see \cite{MR1421157}) and show good overall
performance in practice (see \cite{MR1370299}).

Classical score tests have been extensively developed
in recent literature: see, for example, the generalized
likelihood ratio statistics for nonparametric models in
\cite{1029.62042}, tailor-made tests in \cite{MR2281882} and the
semiparametric generalized likelihood ratio statistics in
\cite{liliang}. See \cite{MR2281882} for a recent general overview of this and other
existing theories of statistical testing, and a discussion of some
advantages and disadvantages of different classes of testing
methods.

An important line of development in the area of optimal testing
concerns with minimax testing, see \cite{Ingster}, and adaptive
minimax testing, see \cite{Spokoiny}. Those tests are optimal in a
certain minimax sense against wide classes of nonparametric
alternatives. In \cite{1029.62042} it was shown that, for many
types of statistical problems, generalized likelihood ratio
statistics achieve optimal minimax and adaptive minimax rates.
Within the method of the present paper, one can construct and use different variations of data-driven
generalized likelihood ratio statistics, too. It is expected that these new tests would share the same optimality properties as the optimal tests from \cite{1029.62042}.

Historically, most of the systematic developments in statistical hypothesis testing were concerned with tests based on directly available observations, rather than with tests for those cases where the variables of interest are only available indirectly, and where the test has to be based on noisy, corrupted, or auxiliary third-party  variables. Examples of this type of problems arise in many modern applications, such as testing hypothesis about noisy signals in statistical inverse problems \cite{langovoy1}, \cite{Holzmann_2007_Density_Testing}, \cite{Bissantz_2007}, or hypothesis testing for stochastic processes. However, most of the existing solutions were proposed either on a case-by-case basis, or for relatively specific types of problems. This paper aims to develop a methodological approach that is applicable for both direct and indirect types of hypothesis testing problems.

\subsection{Main contributions}

In this paper we propose and study a general method for construction of consistent statistical tests on the basis of possibly indirect, corrupted, or partially available observations.

We introduce a class of \emph{data-driven} statistical \emph{NT-tests}. These tests are concerned with testing hypotheses about variables of interest $X_1, X_2, \ldots, X_m$ on the basis of auxiliary third-party variables $Y_1, Y_2, \ldots, Y_n$, and their corresponding test statistics are, in general, random quadratic forms of the type

\begin{equation*}
T_S =\biggr\{ \frac{1}{\sqrt{n}}\, \sum_{j=1}^{n} l (Y_j) \biggr\}
\;L\; {\biggr\{ \frac{1}{\sqrt{n}}\, \sum_{j=1}^{n} l (Y_j)
\biggr\} }^{T} \,,
\end{equation*}

\noindent where $l=(l_1, \ldots, l_S)$ is an appropriate vector-function with a random dimension chosen via a penalization-based selection rule $S$, and $L$ serves as a normalization matrix. The choice of $l$, $S$ and $L$ is problem-dependent and will be treated in this paper.

Neyman's smooth tests, data-driven score tests, some types of multi-sample tests, and efficient deconvolution density tests serve as examples for this class of tests.

The main goal of this paper is to derive a unified approach for construction of consistent NT-tests on the basis of possibly indirect or noisy observations, and in proposing a master theorem establishing consistency of NT-tests. We establish the behavior of model selection rules and data-driven tests under both the null and the alternative hypothesis, derive an explicit detectability rule for alternative hypotheses, and prove a master consistency theorem for data-driven NT-tests.

The paper shows that the tests are applicable to a wide range of problems, including hypothesis testing in statistical inverse problems, multi-sample problems, dependent data problems, and nonparametric hypothesis testing. Semi- and nonparametric generalized likelihood ratio statistics from \cite{1029.62042} and \cite{liliang}, score processes from \cite{MR2281882}, and empirical likelihood from \cite{MR946049}, could be used to build consistent data-driven NT-tests.

NT-tests are data-driven and are incorporated with model selection rules. Our method allows to use a wide class of model selection rules that are based on the penalization idea. In particular, many of the optimal penalties, derived in statistical literature, can be used in our tests. The method of this paper allows for big freedom in choice of penalties and dimension growth rates in data-driven model selection, and for flexibility in choosing models with convenient and realistic regularity assumptions. This allows to widen applicability of already existing data-driven tests.

It becomes possible to construct and properly tune
data-driven tests for new statistical problems by combining recent developments in model selection, efficient testing,
generalized likelihood, and other areas. We describe some of
the possibilities in this paper. A statistician can take into account
specific features of his particular problem and choose a test with desired properties among a family of
consistent data-driven tests.

In many modern problems, e.g. in semiparametric hypothesis testing, the model is so complicated that the calculations necessary for a proof of consistency  are hardly possible to carry out. Consistency theorems of this paper allow to bypass this step, and deliver consistency results in problems where direct computations are too cumbersome. In certain cases, our consistency theorems lead to generalization or improvement of already existing
results (see, for example, Proposition \ref{GeneralTheoryTh9} below).

For any data-driven NT-test we give an explicit rule to
determine, for every particular alternative, whether the test will
be consistent against this alternative. This rule allows us to
describe, in a closed form, the set of detectable alternatives for every
NT-test.

\subsection{Paper outline}

In Section \ref{GeneralTheorySection2}, we describe the framework
and introduce a class of NT-statistics, for the case of deterministic model dimension. This is one of the main building blocks for our tests.
In Section \ref{GeneralTheorySectionSelectionRule}, we define penalization-based model selection
rules that will be used in our tests, and introduce a concept of data-driven tests. In Section \ref{GeneralTheorySection3},
we establish behavior of model selection rules and data-driven NT-tests for the case when the alternative hypothesis is true. In this Section, we also derive an explicit consistency condition that allows to check whether any particular alternative could be detected by the data-driven NT-test.
In Section \ref{GeneralTheorySection4} we describe what
happens with a data-driven NT-test under the null hypothesis. Finally, in Section
\ref{GeneralTheorySection5} a master consistency theorem for data-driven NT-tests
is given. Additionally, each section contains examples illustrating the applicability of new concepts by connecting them to a variety of well-established special cases.

\section{NT-statistics}\label{GeneralTheorySection2}

\subsection{Basic definition: NT-statistics with deterministic dimensions}

Let $X_1, X_2, \ldots$ be a sequence of random variables with
values in an arbitrary measurable space $\mathbb{X}.$ Suppose that
for every $m$ the random variables $X_1,  \ldots , X_m$ have the
joint distribution $P_m$ from the family of distributions
$\mathbb{P}_m.$ Suppose there is a given functional $\mathcal{F}$
acting from the direct product of the families
$\otimes_{m=1}^{\infty}\, \mathbb{P}_m = (\mathbb{P}_1,
\mathbb{P}_2, \ldots)$ to a known set $\Theta ,$ and that
$\mathcal{F} (P_1, P_2, \ldots) = \theta .$ We consider the following generic \\

\noindent \textbf{Problem}: test the hypothesis $H_0: \quad \theta \in \Theta_0 \subset \Theta \quad$ against the alternative $H_A: \quad \theta \in \Theta_1 = \Theta \setminus \Theta_0$, on the basis of \emph{indirect observations} $Y_1,$ $\ldots ,$ $Y_n$
having their values in an arbitrary measurable space $\mathbb{Y}$
(i.e. \emph{not necessarily on the basis of variables of interest} $X_1,$ $\ldots ,$ $X_m$). \\

Here $\Theta$ can be any set, for example, a functional
space; correspondingly, parameter $\theta$ can be infinite
dimensional. It is not assumed that $Y_1,$ $\ldots,$ $Y_n$ are
independent or identically distributed. The measurable space
$\mathbb{Y}$ can be infinite dimensional. This allows
to apply the results of this paper in statistics for stochastic
processes. Additional assumptions on $Y_i'$s will be imposed below,
when it would be necessary.

An important feature of our approach is that we are able to consider
the case when the null hypothesis $H_0$ is not about observable $Y_i'$s, but about some
other random variables $X_1,$ $\ldots,$ $X_m .$ This makes it
possible to use our method in the case of statistical inverse
problems (see \cite{langovoy1}, and Examples 2 and 3 below). Under conditions of Theorem \ref{GeneralTheoryTh5} of the present paper, it would be still possible to extract from $Y_i'$s enough information in order to build a consistent test about $X_i'$s.

Now we introduce one of the main concepts of this paper.

\begin{definition}\label{GeneralTheoryDef3}\textbf{(NT-statistics with deterministic dimensions).}
Suppose we have $n$ random observations $Y_1,$ $\ldots ,$ $Y_n$
with values in a measurable space $\mathbb{Y}.$ Let $k$ be a fixed
number and $l=(l_1, \ldots, l_k)$ be a vector-function, where
$l_i:\, \mathbb{Y} \rightarrow \mathbb{R}$ for $i=1, \ldots , k$
are some known Lebesgue measurable functions. Set

\begin{equation}\label{GeneralTheory3}
L= {\{ E_0 {[l (Y)]}^{T} l (Y) \}}^{-1}\,,
\end{equation}

\noindent where the mathematical expectation $E_0$ is taken with
respect to $P_0,$ and $P_0$ is the (fixed and known in advance)
distribution function of some auxilliary random variable $Y,$ where
$Y$ is assuming its values in the space $\mathbb{Y}.$ Assume that
$E_0\,l(Y)=0$ and $L$ is well defined in the sense that all its
elements are finite. Put

\begin{equation}\label{GeneralTheory4}
T_k =\biggr\{ \frac{1}{\sqrt{n}}\, \sum_{j=1}^{n} l (Y_j) \biggr\}
\;L\; {\biggr\{ \frac{1}{\sqrt{n}}\, \sum_{j=1}^{n} l (Y_j)
\biggr\} }^{T} \,.
\end{equation}

\noindent We call $T_k$ a \emph{statistic of Neyman's type} (or
NT-statistic).
\end{definition}

Here $l_1,$ $\ldots,$ $l_k$ can be some score functions, as was the
case for the classical Neyman's test \cite{Neyman1937}, but it is possible to use any
other functions, depending on the problem under consideration. We
prove below that under additional assumptions it is possible to
construct consistent tests of such form without using scores in
(\ref{GeneralTheory4}). We will discuss different possible sets of
meaningful additional assumptions on $l_1,$ $\ldots,$ $l_k$ in Sections \ref{GeneralTheorySection3} -
\ref{GeneralTheorySection5}.

Scores are based on the notion of maximum likelihood. In our constructions it is possible to use, for example,
truncated, penalized or generalized likelihood to build a test. It is even
possible to use functions $l_1,$ $\ldots,$ $l_k$ such that
they are unrelated to any kind of likelihood.

If, for example, $Y_i'$s are equally distributed, then the natural
choice for $P_0$ is their distribution function under the null
hypothesis. Thus, $L$ will be the inverse to the covariance matrix
of the vector $l (Y).$ Such construction is often used in score
tests for simple hypothesis. But our definition allows to use a
reasonable substitution instead of the covariance matrix. This is helpful for testing in certain semi- or nonparametric models.\\

\subsection{Examples of NT-tests}

\noindent {\bf Example 1. Neyman's smooth tests.}\label{Example_1} The basic example of an NT-statistic is the
Neyman's smooth test statistic for simple hypotheses (see
\cite{Neyman1937}). Let $X_1,$ $\ldots ,$ $X_n$
be i.i.d. random variables. Consider the problem of testing the
simple null hypothesis $H_0$ that the $X_i'$s have the uniform
distribution on $[0,1].$ Let $\{\phi_j\}$ denote the family of
orthonormal Legendre polynomials on $[0,1].$ Then for every $k$ one
has the test statistic

\[ T_k =\, \sum_{j=1}^{k}{\biggr\{
\frac{1}{\sqrt{\,n}}\sum_{i=1}^{n} \phi_j (X_i) \biggr\}}^2 \,.\]

\noindent We see that Neyman's classical test statistic is
an NT-statistic, with $Y_i = X_i$ for all $i$ in this very simplest example. \\

Hypothesis testing in statistical inverse problems is an important but still a rather unexplored area in modern statistics. Applications to statistical inverse problems served as the main motivation for studying NT-tests. \\

\noindent {\bf Example 2. Statistical inverse problems.}\label{Example_2} The most
well-known example here is the deconvolution problem. This problem
appears when one has noisy signals or measurements: in physics,
seismology, optics and imaging, engineering. It is a building block
for many complicated statistical inverse problems. The book \cite{Carroll_etal} provides many examples related to deconvolution problems. In \cite{langovoy1}, data-driven score tests for the problem were constructed, thus solving the deconvolution density testing problem. The solution of \cite{langovoy1} is a special case of the results of the present paper.

The basic deconvolution density testing problem is formulated as follows. Suppose that instead of
$X_i$ one observes $Y_i,$ where
$$Y_i = X_i + \varepsilon_i ,$$ and $\varepsilon_i'$s are i.i.d.
with a known density $h$ with respect to the Lebesgue measure
$\lambda;$ also $X_i$ and $\varepsilon_i$ are independent for each
$i$ and $E\,\varepsilon_i =0,\,
 0 < E\,{\varepsilon}^2 < \infty .$ Assume that $X$ has a
density with respect to $\lambda.$ Our null hypothesis $H_0$ is
the simple hypothesis that $X$ has a known density $f_0$ with
respect to $\lambda.$ Let us choose for every $k \leq d(n)$ an
auxiliary parametric family $\{f_{\theta}\},\,$ $\theta \in \Theta
\subseteq \mathbb{R}^k$ such that $f_0$ from this family coincides
with $f_0$ from the null hypothesis $H_0.$ The true $F$ possibly
has no relation to the chosen $\{f_{\theta}\}.$ Set

\begin{equation}\label{score_deconvolution}
l (y) = \frac{ \frac{ \,\partial
}{\,\partial\theta}\,\Bigr(\int_{\R}\,f_{\theta}(s)\,h(\,y-s)\,ds\Bigr)\Bigr|_{\theta=0}}
{\int_{\R}\,f_0(s)\,h(\,y-s)\,ds}\,
\end{equation}

\noindent and define the corresponding test statistic $U_k$ by the
formula (\ref{GeneralTheory4}). Under appropriate regularity
assumptions, $U_k$ is an NT-statistic (see \cite{dissertation}). As was shown in \cite{langovoy1}, the data-driven test $U_S$, based on $U_k$, is asymptotically consistent against a wide class of nonparametric alternatives.

Subsequently, \cite{butucea2009adaptive} proposed an adaptive goodness-of-fit test for this model with indirect observations, which appears to be the first nonparametric adaptive test in statistical inverse problems. The test from \cite{butucea2009adaptive} is restricted to polynomially smooth error density and to signal densities coming from even narrower smoothness classes. The data-driven test $U_S$ from \cite{dissertation} was intended to cover wider scope of applications and to have a flexible penalization procedure, so optimality properties were not attainable over the intended consistency range. Adaptive modifications of $U_S$ can be constructed via using special penalties, tailored to deliver minimax optimality for convolution densities from properly restricted classes (analogously to the method of \cite{MR1833962}). \\

The following example serves as an illustration of applicability of NT-tests to multisample problems. \\

\noindent {\bf Example 3. Rank Tests for Independence.}\label{Example_3} Let $X_1 = (V_1,
W_1), \ldots , X_n = (V_n, W_n)$ be i.i.d. random variables with the
distribution function $D$ and the marginal distribution functions
$F$ and $G$ for $V_1$ and $W_1 .$ Assume that $F$ and $G$ are
continuous, but unknown. It is the aim to test the null hypothesis
of independence

\begin{equation}\label{GeneralTheory25}
H_0: \quad D (v, w) = F(v)G(w), \quad v,\, w \in \mathbb{R},
\end{equation}

\noindent against a wide class of alternatives. The following
construction was proposed in \cite{MR1689233}.

\noindent Let $\phi_j$ denote the $j-$th orthonormal Legendre
polynomial (i.e., $\phi_1 (x) = \sqrt{3} (2x-1),$ $\phi_2 (x)= \sqrt{5}
(6 x^2 - 6x +1),$ etc.). The score test statistic from
\cite{MR1689233} is

\begin{equation}\label{GeneralTheory26}
T_k = \, \sum_{j=1}^{k}{\biggr\{ \frac{1}{\sqrt{n}}\,
\sum_{i=1}^{n} \phi_j \biggr(\frac{R_i - 1/2}{n}\biggr) \phi_j
\biggr(\frac{S_i - 1/2}{n}\biggr) \biggr\} }^{2} \,,
\end{equation}

\noindent where $R_i$ stands for the rank of $V_i$ among $V_1,
\ldots , V_n$ and $S_i$ for the rank of $W_i$ among $W_1, \ldots ,
W_n .$ Thus defined $T_k$ satisfies Definition
\ref{GeneralTheoryDef3} of NT-statistics: put

$$Y_i = (Y_i^{(1)}, Y_i^{(2)}) := \biggr(\frac{R_i - 1/2}{n} ,
\frac{S_i - 1/2}{n}\biggr) $$ and $l_j (Y_i) := \phi_j (Y_i^{(1)})\,
\phi_j (Y_i^{(2)}).$ Under the null hypothesis $L_k = E_{k \times k},$
and $E_0 l(Z) = 0.$ Thus, $T_k$ is an NT-statistic. New $Y_i$
depends on the original $X_i'$s ($= (V_i, W_i)'$s) in a nontrivial way, but
still contains some information about the pair of interest. $\Box$

Many other examples, including data-driven tests based on partial likelihood
(see \cite{MR0400509}), can be found in \cite{dissertation} and \cite{langovoy_report_2009-007}.

\section{Data-driven tests and model selection}\label{GeneralTheorySectionSelectionRule}

\subsection{Selection rules and data-driven tests: definitions}

It was shown that for achieving optimality of efficient score tests it
is important to select the right number of components in the test
statistic (see \cite{MR1226715}, \cite{MR1248222}, \cite{MR1395735}, \cite{MR1964450}). Therefore, we provide a
corresponding refinement for NT-statistics as well. Using the idea of a
penalized likelihood, we propose a general mathematical framework
for constructing rules to find reasonable model dimensions. We make
our tests \emph{data-driven}, i.e., the tests choose a
reasonable number of components in the test statistics automatically and accordingly to the data. Our construction offers substantial freedom in the choice
of penalties and possible basis models. This way, it is easy to take into account specific features of particular problems
and to choose suitable statistical models and reasonable penalties in order to
build tests with desired properties for each particular problem.

We will not restrict a possible number of components in test
statistics by some fixed number, but instead we allow the number of
components to grow unlimitedly as the number of observations grows.
This is important because the more observations $Y_1,$ $\ldots ,$
$Y_n$ we have, the more information is available about the problem.
This makes it possible to give a more detailed description of the
phenomena under investigation. In our case this means that the
complexity of the model and the possible number of components in the
corresponding test statistic grow with $n$ at a controlled rate.

Denote by $M_k$ a statistical model designed for a specific
statistical problem satisfying assumptions of Section
\ref{GeneralTheorySection2}. Assume that the true parameter value
$\theta$ belongs to the parameter set of $M_k$, call it $\Theta_k .$
We say that the family of models $M_k$ for $k=1,2, \ldots$ is nested
if for their parameter sets it holds that $\Theta_1 \subseteq
\Theta_2 \subseteq \ldots .$ Here $\Theta_k'$s can be
infinite-dimensional. We also do not require that all $\Theta_k'$s are
different (analogously to \cite{MR1848946}, p. 221).

\noindent Let $T_k$ be an \emph{arbitrary} statistic for testing
validity of the model $M_k$ on the basis of observations $Y_1,
\ldots , Y_n.$ The following definition applies for the sequence of
statistics $\{T_k\}.$

\begin{definition}\label{GeneralTheoryDef2}\textbf{(Penalty, selection rule, and data-driven tests)}
Consider a nested family of models $M_k$ for $k=1,$ $\ldots ,$
$d(n),$ where $d(n)$ is a control sequence, giving the largest
possible model dimension for the case of $n$ observations. Choose a
function $\pi (\cdot,\cdot): \N \times \N \rightarrow \R,$ where
$\N$ is the set of natural numbers. Assume that $\pi(1,n) <\pi(2,n)<
\ldots <\pi(d(n),n)$ for all $n$ and $\pi(j,n)-\pi(1,n) \rightarrow
\infty$ as $n \rightarrow \infty$ for every $j= 2, \ldots ,d(n).$
Call $\pi(j,n)$ a \emph{penalty attributed to the $j$-th model $M_j$
and the sample size n.}

Then a \emph{selection rule} $S$ for the
sequence of statistics $\{T_k\}$ is an integer-valued random
variable satisfying the condition

\begin{equation}\label{GeneralTheory2}
S=\min\bigr\{ k:\,1\leq k \leq d(n);\, T_k - \pi(k,n) \geq T_j
-\pi(j,n),\, j= 1, \ldots ,d(n)\bigr\}\,.
\end{equation}

\noindent We call $T_S$ a \emph{data-driven test statistic} for
testing validity of the initial model.
\end{definition}

The definition is expected to be applied, of course, only if the sequence $\{T_k\}$ is increasing unboundedly, in the sense
that $T_k (Y_1, \ldots , Y_n) \rightarrow \infty$ in probability as $k \rightarrow \infty .$

In statistical literature, one often tries to choose penalties
such that they possess some sort of minimax or Bayesian optimality.
Classical examples of the penalties constructed via this approach
are Schwarz's penalty $\pi(j,n)=j \, \log n$ (see \cite{MR0468014}),
and minimum description length penalties, see \cite{Rissanen}. For
more examples of optimal penalties and recent developments, see
\cite{AGP}, \cite{MR1848946} or \cite{BTW}. In this paper, we do not
aim for optimality of the penalization; our goal is to be able to
build consistent data-driven tests based on different choices of
penalties. The penalization technique that we use in this paper allows
for many possible choices of penalties. In our
framework it is possible to use most of the penalties from the
abovementioned papers. As an illustration, see Example 4 below.

\subsection{Model selection rules: examples}

\noindent {\bf Example 2, continued. Data-driven tests for inverse problems.}\label{Example_2}
One can incorporate $U_k$ with the following selection rule widely used in statistical literature:

\begin{equation}\label{GeneralTheory27}
S=\min\bigr\{ k:\,1\leq k \leq d(n);\, T_k - k \log n \geq T_j - j
\log n,\, j= 1,2, \ldots ,d(n)\bigr\}\,.
\end{equation}

\noindent This selection rule satisfies Definition
\ref{GeneralTheoryDef2}, so thus defined $U_S$ extends the test statistic
from \cite{langovoy1} and makes it a data-driven NT-statistic. $\Box$\\

\noindent {\bf Example 4. Gaussian model selection.} Birg\'{e} and
Massart in \cite{MR1848946} proposed a method of model selection in
a framework of Gaussian linear processes. This framework is quite
general and includes as special cases a Gaussian regression with
fixed design, Gaussian sequences and the model of Ibragimov and
Has'minskii. In this example we briefly describe the construction
(for more details see the original paper) and then discuss the
relation with our results.

Given a linear subspace $\mathbb{S}$ of some Hilbert space
$\mathbb{H},$ we call Gaussian linear process on $\mathbb{S},$ with
mean $s \in \mathbb{H}$ and variance $\varepsilon^2,$ any process
$Y$ indexed by $\mathbb{S}$ of the form

$$Y(t) = \langle s, t \rangle + \varepsilon Z(t),$$ for all $t \in
\mathbb{S},$ and where $Z$ denotes a linear isonormal process
indexed by $\mathbb{S}$ (i.e. $Z$ is a centered and linear
Gaussian process with covariance structure $E[Z(t)Z(u)]= \langle
t, u \rangle$). Birg\'{e} and Massart considered estimation of $s$
in this model.

\noindent Let $S$ be a finite dimensional subspace of $\mathbb{S}$
and set $\gamma (t) = \|t\|^2 - 2 Y(t).$ One defines the projection
estimator on $S$ to be the minimizer of $\gamma (t)$ with respect to
$t \in S.$ Given a finite or countable family $\{S_m\}_{m \in
\mathcal{M}}$ of finite dimensional linear subspaces of $S,$ the
corresponding family of projection estimators $\widehat{s}_m,$ built
for the same realization of the process $Y,$ and given a nonnegative
function $pen$ defined on $\mathcal{M},$ Birg\'{e} and Massart
estimated $s$ by a penalized projection estimator $\widetilde{s} =
\widehat{s}_{\widehat{m}},$ where $\widehat{m}$ is any minimizer
with respect to $m \in \mathcal{M}$ of the penalized criterion

$$crit (m) = - \| \widehat{s}_m \|^2 + pen (m) = \gamma (\widehat{s}_m) + pen (m) .$$

\noindent They proposed some specific penalties $pen$ such that the
penalized projection estimator has the optimal order risk with
respect to a wide class of loss functions. The method of model
selection of this paper has a relation with the one of
\cite{MR1848946}.

\section{Data-driven NT-tests: alternative hypothesis case}\label{GeneralTheorySection3}

In this section we study the behavior of NT-statistics and model selection rules under alternatives.

\subsection{Explicit consistency condition}

Let $P$ denote the alternative distribution of $Y_i'$s. Suppose that
$E_P\,l(Y)$ exists.

\noindent Now we formulate the following crucial explicit \emph{consistency
condition}: \\

\noindent \textbf{Consistency condition}

$\quad$

$\langle \mathbf{C} \rangle \quad\quad\mbox{there exists
integer}\,\, K = K(P)\geq 1 \,\,\mbox{such that}$

$\quad\quad\quad\quad E_P\,l_1(Y)=0, \, \ldots ,
E_P\,l_{K-1}(Y)=0,\,\,E_P\,l_K (Y) = C_P \neq 0\,,$

$\quad$

\noindent where $l_1,$ $\ldots ,$ $l_k$ are as in Definition
\ref{GeneralTheoryDef3}.\\

\noindent {\bf Example 1 (continued). Consistent smooth tests.}  In this basic problem, condition $\langle C \rangle$ is simply equivalent to
the following one: there exists $K = K_{\mathbb{P}}$ such that $E_{\mathbb{P}}\, \phi_K (X) \neq 0$. $\Box$
\\

\noindent {\bf Example 2 (continued). Consistent density testing in inverse problems.}  Let $F$ be a true distribution of $X$. Here $F$ is not supposed to be parametric and possibly does not have a density. Let $l$ be the score vector given by (\ref{score_deconvolution}) and $h$ be the known error density. As was shown in \cite{langovoy1} and \cite{dissertation}, if there exists an integer $K = K(F)\geq 1$ such that

\begin{equation}\label{score_deconvolution_consistency}
E_{F \ast h}\,l_1(Y)=0, \, \ldots ,
E_{F \ast h}\,l_{K-1}(Y)=0,\,\,E_{F \ast h}\,l_K (Y) \neq 0\,,
\end{equation}

\noindent then $U_S$ is asymptotically consistent against a wide class of nonparametric alternatives. $\Box$ \\

\noindent We see that conditions equivalent to $\langle C \rangle$ naturally appear in studies of consistency of data-driven score tests. This condition explicitly describes which directions of alternatives to the null hypothesis can be captured by an NT-test.

\subsection{Model selection under alternatives}

We impose now two additional assumptions on the abstract model of Section
\ref{GeneralTheorySection2}. First, we assume that $Y_1, Y_2,
\ldots$ are identically distributed. We do \emph{not} assume that
$Y_1, Y_2, \ldots$ are independent. It is possible that the sequence
of interest $X_1, X_2, \ldots$ consists of dependent and
nonidentically distributed random variables.

It is only important that the new (possibly obtained by a complicated transformation)
sequence $Y_1, Y_2, \ldots$ obeys the consistency conditions. Then
it is possible to build consistent tests of hypotheses about
$X_i'$s. The intuition behind this is that, even after a complicated
transformation, the transformed sequence still can contain some part
of the information about the sequence of interest. However, if the
transformed sequence $Y_1, Y_2, \ldots$ is not chosen reasonably,
then test can be meaningless: it can be formally consistent, but
against an empty or almost empty set of alternatives.

Another assumption we impose is that the first $K$ components of $l(Y_i)'$s
satisfy both the law of large numbers and the multivariate central
limit theorem, i.e. that for the vectors $l(Y_1) = (l_1 (Y_1), \ldots, l_K (Y_1)),$ $\ldots ,$
$l(Y_n) = (l_1 (Y_n), \ldots, l_K (Y_n))$ it holds that

\[
\frac{1}{n}\sum_{j=1}^{n} l(Y_j) \rightarrow E_P\,l(Y)
\quad\mbox{in}\,\,P-\mbox{probability as}\,\,n \rightarrow \infty
,
\]

\begin{equation}\label{GeneralTheory34}
n^{-1/2} \sum_{j=1}^{n} ( l(Y_j) - E_P\,l(Y) ) \rightarrow_{d}
\mathcal{N} %(0, L_{P}^{-1})\,,
\end{equation}

\noindent where $\mathcal{N}$ stands for a $K-$dimensional normal
distribution with zero mean vector.

\noindent These assumptions restrict the choice
of the function $l$ and leave us with a uniquely determined $P_0.$
Nonetheless, random variables of interest $X_1,$ $\ldots ,$ $X_n$
are still allowed to be nonidentically distributed, and their transformed counterparts $Y_1,$ $\ldots ,$ $Y_n$
are still allowed to be dependent.

\noindent Without loss of generality, below we always assume that

\begin{equation}\label{GeneralTheory5}
\lim_{n \rightarrow \infty}\,d(n)\,=\,\infty \,.
\end{equation}

\noindent The (relatively degenerate) case when $d(n)$ is nondecreasing and bounded from above by some constant $D$ can be handled analogously to the
method of this paper, see \cite{dissertation}.

To clarify the statement of the next theorem, we remind that in Definition \ref{GeneralTheoryDef3} $L$
is a $k \times k-$matrix. Below we will sometimes denote it
$L_k$ in order to stress the model dimension. Accordingly, ordered
eigenvalues of $L_k$ will be denoted $\lambda^{(k)}_1 \geq
\lambda^{(k)}_2 \geq \ldots \geq \lambda^{(k)}_k.$ We have the
sequence of matrices ${\{L_k\}}^{\infty}_{k=1}$ and each matrix has
its own eigenvalues. Whenever possible, we will use the
simplified notation from Definition \ref{GeneralTheoryDef3}.

\begin{proposition}\label{GeneralTheoryTh2}
Let $\langle C \rangle$ and (\ref{GeneralTheory5}) holds and

\begin{equation}\label{GeneralTheory6}
\lim_{n \rightarrow \infty} \sup_{k \leq d(n)} \frac{\,\pi(k,n)}{n
\lambda^{(k)}_k} = 0\,.
\end{equation}

\noindent Then

$$\lim_{n \rightarrow \infty}\,P(S \geq K)=1\,.$$
\end{proposition}

\begin{remark}\label{GeneralTheoryRemark2} Condition (\ref{GeneralTheory6})
means that not only $n$ tends to infinity, but that it is also
possible for $k$ to grow infinitely, but at the controlled rate. This condition
is needed to ensure that the penalty $\pi$ is not so large that it trivializes
the data-driven model choice. It is easy to check (\ref{GeneralTheory6}) for
any specific function $\pi$ since the eigenvalues of $L_k$ are known in advance,
and it is easy to satisfy (\ref{GeneralTheory6}) because we are free to choose
$\pi$ from the whole range of possible penalty functions.
\end{remark}

\subsection{Data-driven NT-tests under detectable alternatives}

Now suppose that the alternative distribution $P$ is such that
$\langle C \rangle$ is satisfied and that there exists a sequence
${\{r_n\}}_{n=1}^{\infty}$ such that $\lim_{n \rightarrow
\infty}\,r_n=\infty $ and

$$\langle \mathbf{A} \rangle \quad\quad\quad\quad P\,\biggr( \frac{1}{n}\,\biggr|
\sum_{i=1}^{n} \bigr[l_K (Y_i) - E_P\,l_K (Y_i)\bigr]\biggr| \geq
y  \biggr)\,=\,O \biggr(\frac{1}{\,r_n}\biggr)\,.$$

\noindent Note that in $\langle A \rangle$ we do not require
uniformity in $y,$ i.e. $r_n$ gives us the rate, but the exact
bound can depend on $y.$ In some sense condition $\langle A
\rangle$ is a way to make the weak law of large numbers for $l_K
(Y_i)'$s more precise. The following lemma shows that condition $\langle A \rangle$ is often easily satisfied.

\begin{lemma}\label{GeneralTheoryLemma2}
Let $l_K (Y_i)'$s be bounded i.i.d. random variables with finite
expectation and variance $\sigma^2.$ Then condition $\langle A
\rangle$ is satisfied with $r_n = \exp (n y^2 / 2 \sigma).$
\end{lemma}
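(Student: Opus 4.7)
The plan is to recognize condition $\langle \mathbf{A} \rangle$ as a classical concentration-of-measure statement for the empirical mean of i.i.d.\ bounded random variables, and to verify it by a Chernoff-type exponential tail bound.

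First I would introduce the centered summands $Z_i := l_K(Y_i) - E_P\, l_K(Y_i)$, which are i.i.d., zero-mean, bounded (inheriting boundedness from $l_K(Y_i)$), with $\mathrm{Var}(Z_i) = \sigma^2$. Condition $\langle \mathbf{A} \rangle$ then asks for
\[
P\biggl( \biggl| \frac{1}{n}\sum_{i=1}^n Z_i \biggr| \geq y \biggr) \;=\; O\bigl(\exp(-n y^2 / (2 \sigma))\bigr),
\]
with $y$ fixed and $n \to \infty$.

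Next I would appeal to an exponential inequality for sums of i.i.d.\ bounded random variables — either Hoeffding's inequality or Bernstein's inequality — applied to $Z_1, \ldots, Z_n$. Either route yields an upper bound of the form $2\exp(-c n y^2)$, where the positive constant $c$ depends on the almost-sure bound on $Z_i$ and the variance $\sigma^2$. Since $y$ is held fixed, this immediately gives the required $O(1/r_n)$ decay provided $c \geq 1/(2\sigma)$.

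The only nontrivial step, and what I expect to be the main obstacle, is matching the constant so as to land on the specific exponent $1/(2\sigma)$ claimed in the lemma (whose scaling in $\sigma$ rather than $\sigma^2$ is slightly unusual). This is bookkeeping in the Chernoff argument: write $P(|\sum_i Z_i| \geq ny) \leq 2\, e^{-\lambda n y}\, \bigl(E[e^{\lambda Z_1}]\bigr)^n$, bound $E[e^{\lambda Z_1}]$ using boundedness (via Hoeffding's lemma) together with the variance $\sigma^2$, and optimize over $\lambda > 0$. One then verifies that the optimal exponent dominates $y^2/(2\sigma)$, at which point $\langle \mathbf{A} \rangle$ follows with the advertised $r_n$. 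I note in passing that for the consistency arguments in the sequel any exponential rate in $n$ would suffice, so the precise form of $r_n$ is not critical to the broader program.
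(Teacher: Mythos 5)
Your proposal is sound but follows a genuinely different route from the paper. The paper does not use a Chernoff/Hoeffding/Bernstein argument at all: it invokes the central limit theorem to replace the tail probability $P\bigl(n^{-1}\sum_i Z_i \geq y\bigr)$ by $1-\Phi(y\sqrt{n}/\sigma)$ and then applies the asymptotic expansion $1-\Phi(x)\sim (2\pi)^{-1/2}x^{-1}e^{-x^2/2}$ at $x=y\sqrt{n}/\sigma\to\infty$, obtaining a bound of order $\exp(-ny^2/(2\sigma^2))$ and declaring the stated $r_n$ ``more than enough.'' Your concentration-inequality approach buys rigor precisely where the paper's argument is weakest: the CLT controls probabilities at fixed arguments, not in the large-deviation regime where the evaluation point grows like $\sqrt{n}$, so the paper's step from the CLT to the Mills-ratio asymptotics is not justified as written, whereas a Chernoff bound gives a genuine non-asymptotic inequality using exactly the boundedness hypothesis (which the paper's proof never actually uses). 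What the paper's route buys, if one accepts the approximation, is the sharp Gaussian constant $1/(2\sigma^2)$ in the exponent. Both proofs then face the same final obstacle, which you correctly single out: neither the Gaussian exponent $y^2/(2\sigma^2)$ nor an optimized Chernoff exponent is guaranteed to dominate the advertised $y^2/(2\sigma)$ (the paper's ``more than enough'' requires $\sigma\leq 1$, and your proposed verification that the Chernoff exponent exceeds $1/(2\sigma)$ can fail for the same reason). This appears to be an infelicity in the statement of the lemma rather than in either argument, and, as you note, any exponential $r_n$ suffices for the consistency theorems downstream.
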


\noindent Therefore, one can often expect exponential rates in
$\langle A \rangle ,$ but even a much slower rate is not a problem. The proof of this standard lemma is omitted.

The main result of this section is the following theorem that describes asymptotic behavior of NT-tests under alternative hypotheses.

\begin{theorem}\label{GeneralTheoryTh3}
Let $\langle A \rangle,$ $\langle C \rangle,$
(\ref{GeneralTheory5}) and (\ref{GeneralTheory6}) holds and

\begin{equation}\label{GeneralTheory11}
d(n)=o(r_n) \quad\mbox{as}\quad n \rightarrow\infty \,.
\end{equation}

\noindent Then $T_S \,\rightarrow_P \, \infty$ as $n
\rightarrow\infty \,.$
\end{theorem}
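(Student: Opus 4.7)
The plan is to reduce the statement $T_S \to_P \infty$ to a lower bound on the fixed-dimension statistic $T_K$, where $K = K(P)$ is the integer supplied by the consistency condition $\langle C \rangle$. First, Theorem \ref{GeneralTheoryTh2} applies directly (its hypotheses $\langle C \rangle$, \eqref{GeneralTheory5} and \eqref{GeneralTheory6} are assumed here), yielding $P(S \geq K) \to 1$. On the event $\{S \geq K\}$, the defining maximality property \eqref{GeneralTheory2} of $S$ gives $T_S - \pi(S,n) \geq T_K - \pi(K,n)$, and since $\pi(\cdot,n)$ is strictly increasing in its first argument, $\pi(S,n) \geq \pi(K,n)$ and therefore $T_S \geq T_K$. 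It therefore suffices to show that $T_K \to_P \infty$.

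To lower-bound $T_K$ I would exploit the positive definiteness of the fixed $K \times K$ matrix $L_K$. Writing $v = (v_1,\ldots,v_K)$ with $v_j = n^{-1/2} \sum_{i=1}^n l_j(Y_i)$, we have
\begin{equation*}
T_K \;=\; v\, L_K\, v^{T} \;\geq\; \lambda^{(K)}_K\, \|v\|^2 \;\geq\; \lambda^{(K)}_K\, v_K^2,
\end{equation*}
where $\lambda^{(K)}_K > 0$ is the smallest eigenvalue of $L_K$ and is a fixed positive constant because $K$ does not depend on $n$. Now $v_K = \sqrt{n}\, \bar{l}_{K,n}$ with $\bar{l}_{K,n} := n^{-1} \sum_{i=1}^n l_K(Y_i)$, and condition $\langle A \rangle$ combined with $\langle C \rangle$ gives, for any $\varepsilon \in (0, |C_P|)$,
\begin{equation*}
P\bigl(|\bar{l}_{K,n} - C_P| \geq \varepsilon\bigr) = O(1/r_n) \to 0,
\end{equation*}
so $\bar{l}_{K,n}^2 \geq (|C_P|-\varepsilon)^2$ with probability tending to one. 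Chaining the bounds, $T_K \geq \lambda^{(K)}_K (|C_P|-\varepsilon)^2\, n \to \infty$ in probability. The combination step
\begin{equation*}
P(T_S \geq M) \;\geq\; P\bigl(T_K \geq M,\, S \geq K\bigr) \;\to\; 1
\end{equation*}
for every $M > 0$ then completes the argument.

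In my view the main obstacle lies upstream, in Theorem \ref{GeneralTheoryTh2} rather than inside Theorem \ref{GeneralTheoryTh3} itself: showing $P(S < K) \to 0$ requires controlling the noise contributions $v_j$ for all $j < K$ (where $E_P l_j = 0$) so that none of the $T_j - \pi(j,n)$ with $j < K$ can accidentally exceed $T_K - \pi(K,n)$. This is presumably where the growth condition \eqref{GeneralTheory11}, $d(n) = o(r_n)$, enters the overall machinery: it lets one invoke $\langle A \rangle$ simultaneously across $j \leq d(n)$ via a union bound, with a total error of order $d(n)/r_n \to 0$. For Theorem \ref{GeneralTheoryTh3} proper, once $S \geq K$ has been secured, the remaining assertion $T_K \to_P \infty$ reduces, via the eigenvalue bound above, to a one-dimensional law-of-large-numbers statement for $\bar{l}_{K,n}$, which is routine.
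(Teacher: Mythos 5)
Your proof is correct, but it takes a genuinely different route from the paper's. The paper argues on the complementary event: it writes $P(T_S\le x)=\sum_{j=K}^{d(n)}P(T_j\le x,\,S=j)+o(1)$, uses monotonicity of the sequence $\{T_k\}$ to replace $T_j$ by $T_K$, bounds the sum crudely by $d(n)\,P(T_K\le x)$, and then controls $P(T_K\le x)$ by the large-deviation rate of $\langle A\rangle$ via Lemma \ref{GeneralTheoryLemma1}; this is precisely where the factor $d(n)$ and hence the hypothesis $d(n)=o(r_n)$ are consumed. You instead extract $T_S\ge T_K$ on $\{S\ge K\}$ directly from the maximality property in (\ref{GeneralTheory2}) together with the monotonicity of $\pi(\cdot,n)$, which eliminates the union bound over $j$ entirely and reduces everything to $T_K\to_P\infty$, for which the plain law of large numbers assumed in Section \ref{GeneralTheorySection3} already suffices (the quantitative rate $\langle A\rangle$ is not needed). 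Your argument is therefore tighter: it establishes the conclusion without invoking $\langle A\rangle$ or (\ref{GeneralTheory11}) at all, and it also avoids relying on $T_j\ge T_K$ for $j\ge K$, which for general NT-statistics is only guaranteed by fiat in Definition \ref{GeneralTheoryDef2}. The one inaccuracy is your closing speculation about where $\langle A\rangle$ and $d(n)=o(r_n)$ enter the machinery: Theorem \ref{GeneralTheoryTh2} is proved using only $\langle C\rangle$, the CLT for the finitely many coordinates $j<K$, and (\ref{GeneralTheory6}); the two conditions you mention are used by the paper inside the proof of Theorem \ref{GeneralTheoryTh3} itself, in the step your argument renders unnecessary.
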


\begin{remark}\label{DistributionSNTStatistics} Sometimes it is possible to find asymptotic distributions of
NT-statistics, both under the null hypothesis and under
alternatives. In order to do this, one can approximate
the quadratic form $T_k$ by certain quadratic form in $N_1, \ldots , N_k$, where $N_i$'s
are Gaussian random variable with suitably chosen mean and covariance
structure. When this approximation is possible, the error of approximation is often of order
$n^{-1/2}$ and depends on the smallest eigenvalue of the covariance
of $l(Y_1).$ See \cite{MR1699003}, p. 1078 for more details.

Note that for proving consistency of data-driven NT-tests one doesn't need to know asymptotic distributions of underlying NT-statistics. Theorem \ref{GeneralTheoryTh3} is strong enough for our purposes.
\end{remark}

\section{Data-driven NT-tests under the null hypothesis}\label{GeneralTheorySection4}

Now we study the asymptotic behavior of data-driven NT-statistics
under the null hypothesis.

\subsection{Proper penalties and majorants for data-driven tests}

The following two technical definitions are useful for data-driven model selection in general.

\begin{definition}\label{GeneralTheoryDef4}\textbf{(Penalties of proper weight)}.
Let $\{ T_k \}$ be a sequence of NT-statistics and $S$ be a
selection rule for it. Suppose that $\lambda_1 \geq \lambda_2 \geq
\ldots$ are ordered eigenvalues of $L,$ where $L$ is defined by
(\ref{GeneralTheory3}). We say that the penalty $\pi (k,n)$ in $S$
is of \emph{proper weight}, if the following conditions holds:

\begin{enumerate}
\item there exists sequences of real numbers ${\{ s(k,n)
\}}_{k,n=1}^{\infty}\,,$ ${\{ t(k,n) \}}_{k,n=1}^{\infty}\,,$ such
that

\begin{enumerate}
\item
\[\lim_{n \rightarrow \infty} \sup_{k \leq u_n}
\frac{\,s(k,n)}{n \lambda^{(k)}_k} = 0 \,,\] where
${\{u_n\}}_{n=1}^{\infty}$ is some real sequence such that
$\lim_{n \rightarrow\infty} u_n=\infty .$

\item $\lim_{n \rightarrow\infty} t(k,n) =\infty $ for every $k
\geq 2$,

$\lim_{k \rightarrow\infty} t(k,n) =\infty $ for every fixed $n.$
\end{enumerate}

\item $s(k,n) \leq \pi (k,n) - \pi (1,n) \leq t(k,n)$ for all $k,$
$n$.

\item \[\lim_{n \rightarrow \infty} \sup_{k \leq m_n}
\frac{\,\pi(k,n)}{n \lambda^{(k)}_k} = 0 \,,\] where
${\{m_n\}}_{n=1}^{\infty}$ is some real sequence such that
$\lim_{n \rightarrow\infty} m_n=\infty .$

\end{enumerate}
\end{definition}

\noindent Definition \ref{GeneralTheoryDef4} is more general
than required for our present goals. As can be seen from Theorem
\ref{GeneralTheoryTh4} below, in our research on consistency of
data-driven tests we can assume $u_n = m_n = d(n)$ for all
$n$, where $d(n)$ is the model complexity growth rate from Definition
\ref{GeneralTheoryDef2}. We have chosen to state Definition \ref{GeneralTheoryDef4}
in its full generality because of its general usefulness in model
selection.

Condition 2 of Definition \ref{GeneralTheoryDef4} is needed to ensure
that the penalty $\pi$ actually penalizes models with large dimension
$k$. The choice of $s(k,n)$ and $t(k,n)$ will be explained below.

\noindent For notational convenience, for $l = (l_1, \ldots , l_k)$
from Definition \ref{GeneralTheoryDef3} we denote

\begin{equation}\label{GeneralTheory12}
\overline{l}_j := \frac{1}{n\,}\,\sum_{i=1}^{n} l_j (Y_i)\,,\quad\quad
\overline{l} := (\overline{l}_1, \overline{l}_2, \ldots ,
\overline{l}_k)\,
\end{equation}

\noindent and define a quadratic form

\begin{equation}\label{GeneralTheory15}
Q_k (\overline{l}) = (\overline{l}_1, \overline{l}_2, \ldots ,
\overline{l}_k)\, L\, {(\overline{l}_1, \overline{l}_2, \ldots ,
\overline{l}_k)}^{T} \,.
\end{equation}

\noindent In the new notation, $T_k = Q_k (\overline{l}),$ where $T_k$ is the statistic from Definition
\ref{GeneralTheoryDef3}. Below we also use the notation of
Definition \ref{GeneralTheoryDef4}.

\begin{definition}\label{GeneralTheoryDef5}\textbf{(Proper majorant)}.
Let $S$ be with a penalty of proper weight. Assume that there
exists a Lebesgue measurable function $\varphi (\cdot ,\cdot):\,
\R \times \mathbb{R} \rightarrow \mathbb{R},$ such that $\varphi$
is monotonically decreasing in the second argument and
monotonically nondecreasing in the first one, and assume that

\begin{enumerate}

\item for every $\varepsilon > 0$ there exists
$K=K_{\varepsilon}$ such that  for every $n > n(\varepsilon)$

$$\!\!\!\!\!\!\!\!\!\!\!\!\!\!\!\!\!\!\!\!\!\!\!\!\!\!\!\!\!\!\!\!\!\!\!\!\!\!\!\!\langle \mathbf{B1} \rangle \quad\quad\quad\quad\quad\quad\sum_{k=K_{\varepsilon}}^{u_n} \varphi (k; s(k,n)) \,<\,\varepsilon \,,$$

\noindent where ${\{u_n\}}_{n=1}^{\infty}$ is as in Definition
\ref{GeneralTheoryDef4}.

\item

$$\!\!\!\!\!\!\!\!\!\!\!\!\!\!\!\!\!\!\!\!\!\!\!\!\!\!\!\!\!\!\!\!\!\!\!\!\langle \mathbf{B2} \rangle \quad\quad\quad\quad\quad P_0 \,(n\,Q_k (\overline{l}) \geq y)\,\leq\, \varphi
(k; y)\,$$ for all $k \geq 1$ and $y \in [s(k,n); t(k,n)]\,,$
where $P_0$ is as in Definition \ref{GeneralTheoryDef3}.

\end{enumerate}

\noindent We call $\varphi$ a \emph{proper majorant} for (large
deviations of) the statistic $T_S.$ Equivalently, we say that (large
deviations of) the statistic $T_S$ are \emph{properly majorated} by
$\varphi.$
\end{definition}

\noindent Again, one is free to assume in this Definition that
$u_n = d(n)$ for all $n$. In Definitions \ref{GeneralTheoryDef4}
and \ref{GeneralTheoryDef5} we need some lower bound $s(k,n)$ to be
sure that the penalty $\pi$ is not "too light", i.e. that the penalty
somehow affects the choice of the model dimension and protects us
from choosing a "too complicated" model. In nontrivial cases, it
follows from $\langle B1 \rangle$ that $s(k,n) \rightarrow \infty$ as $k \rightarrow
\infty .$ The exact choice of functions $s$ and $t$ for each specific
problem is dictated by the variant of inequalities $\langle B1 \rangle$ and $\langle B2 \rangle$ that
holds in this particular statistical problem. \\

\noindent {\bf Example 1 (continued). Model selection for data-driven smooth tests.} Put $\pi (k,n) = k\,\log n$. Then Prohorov's inequality (see Appendix) suggests the following possible $\varphi$ in Definition \ref{GeneralTheoryDef5}:

\begin{equation}\label{Neyman_Majorant}
\varphi (k; y)\,=\, \frac{150210}{\Gamma (k/2)}\,
{\biggr(\frac{y^2}{2}\biggr)}^{\frac{k-1}{2}} \exp \biggr\{ - \frac{y^2}{4} \biggr\}\,,
\end{equation}

\noindent with, for example, $s(k,n):= 3k$ and $t(k, n):= 1/4 \sqrt{\,n/k\,}$. All conditions of Definitions \ref{GeneralTheoryDef4} and \ref{GeneralTheoryDef5} can be easily checked. It follows that one can afford any model dimension growth rate such that $d(n) = o ({\{n/\log n\}}^{1/3})$.  $\Box$

For many test statistics Prohorov's inequality can't be used
to estimate the large deviations. This is usually the case for
more complicated models where the matrix $L$ is not diagonal. This
is typical for statistical inverse problems as well as for problems with dependent data. If one deals with likelihood-based NT-statistics, then the easiest way is to use large deviations results that were established in studies on usual likelihood tests and their efficiencies. Otherwise, one has to find a direct way to estimate the test statistic.

\begin{remark}\label{Remark_Quadratic_Forms}
There is a general approach that makes it possible to check conditions of
Definition \ref{GeneralTheoryDef5}, and to study distributions of NT-statistics for moderate sample sizes, in many particular situations. This
method consists of two steps. On the first step, one approximates the
quadratic form $Q(\overline{l}(Y))$ by the simpler quadratic form $Q(N),$
where $N$ is the Gaussian random variable with the same mean and covariance
structure as $l(Y).$ This approximation is possible, for example,
under conditions given in \cite{MR1699003} or \cite{MR1387646}.
These authors gave the rate of convergence for such approximation.
The second step is to establish a large deviation result for
the quadratic form $Q(N);$ this form has a more predictable
distribution. Even for strongly dependent random variables, one can hope
to use results from \cite{MR1682588}.
\end{remark}

\subsection{Data-driven NT-statistics under null hypotheses}

The following is the key result describing the behaviour of a data-driven NT-statistic under the null hypothesis.

\begin{theorem}\label{GeneralTheoryTh4}
Let $\{ T_k \}$ be a sequence of NT-statistics and $S$ be a
selection rule for the sequence. Assume that the penalty in $S$ is of proper
weight and that large deviations of statistics $T_S$ are properly
majorated. Suppose that

\begin{equation}\label{GeneralTheory16}
d(n)\,\leq\, \min \{u_n \,, m_n\}\,.
\end{equation}

\noindent Then $S= O_{P_0}(1)$ and $T_S=O_{P_0}(1).$
\end{theorem}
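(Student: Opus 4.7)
The plan is to treat the two conclusions separately, each built around condition (B) together with the summability in (B2).

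For $S = O_{P_0}(1)$: since every $T_k = n\,Q_k(\overline{l})$ is a nonnegative quadratic form (the matrix $L$ is positive definite), the maximality defining $S$ gives
\[
T_S - \pi(S,n) \;\geq\; T_1 - \pi(1,n) \;\geq\; -\pi(1,n),
\]
so $T_S \geq \pi(S,n) - \pi(1,n) \geq s(S,n)$ by part (2) of Definition \ref{GeneralTheoryDef4}. The event $\{S \geq K\}$ therefore forces $T_k \geq s(k,n)$ for some $k \in [K, d(n)]$. A union bound plus condition (B) at the admissible value $y = s(k,n) \in [s(k,n), t(k,n)]$, together with $d(n) \leq u_n$ from (\ref{GeneralTheory16}), yields
\[
P_0(S \geq K) \;\leq\; \sum_{k=K}^{d(n)} P_0\bigl(T_k \geq s(k,n)\bigr) \;\leq\; \sum_{k=K}^{u_n} \varphi\bigl(k; s(k,n)\bigr).
\]
Condition (B2) now makes the right-hand side smaller than any prescribed $\varepsilon$ once $K = K_\varepsilon$ and $n \geq n(\varepsilon)$.

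For $T_S = O_{P_0}(1)$: decompose by the value of $S$,
\[
P_0(T_S \geq M) \;\leq\; P_0(S > K) \;+\; \sum_{k=1}^{K} P_0(T_k \geq M).
\]
By the tightness of $S$ just established, the first term is $< \varepsilon/2$ for $K$ large, uniformly in $n$. For each fixed $k \leq K$, condition 1(b) of Definition \ref{GeneralTheoryDef4} provides $t(k,n) \to \infty$, so eventually $M \leq t(k,n)$; applying (B) at $y = M$ (which lies in $[s(k,n), t(k,n)]$) gives $P_0(T_k \geq M) \leq \varphi(k; M)$, which is made as small as desired by choosing $M$ large, using that $\varphi(k;\cdot)$ is decreasing. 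A finite sum of $K$ such terms then contributes at most $\varepsilon/2$.

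The main obstacle is reconciling the fixed cutoff $M$ in the second step with the restricted window $[s(k,n), t(k,n)]$ on which (B) is guaranteed: if $s(k,n)$ were to grow unboundedly with $n$ for some small fixed $k$, a constant $M$ would eventually drop below $s(k,n)$, and (B) could not be invoked at $y = M$. This is controlled in the intended applications in one of two ways. Either the construction permits $s(k,n)$ to be taken small (or even zero) for small $k$, reserving the growth of $\pi(k,n) - \pi(1,n)$ for its upper end $t(k,n)$; or one appeals to the multivariate central limit theorem for $n^{-1/2}\sum_{j} l(Y_j)$ under $P_0$ (cf. Remark \ref{DistributionSNTStatistics}), giving $T_k \to_d Z^T L Z$ with $Z \sim \mathcal{N}(0, L^{-1})$ and hence $T_k = O_{P_0}(1)$ for each fixed $k$. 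Either route closes the gap and completes the argument.
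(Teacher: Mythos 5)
Your argument is correct and follows essentially the same route as the paper: the same union bound over $\{S \geq K\}$ combined with (B) and (B2) (the paper applies (B) at $y = \pi(k,n)-\pi(1,n) \in [s(k,n),t(k,n)]$ and then uses monotonicity of $\varphi$, which is equivalent to your application at $y=s(k,n)$), followed by the same decomposition of $P_0(T_S \geq M)$ into the finitely many terms with $k \leq K_\varepsilon$ plus $P_0(S > K_\varepsilon)$. The obstacle you flag in your final paragraph is handled in the paper exactly by your second route: it never invokes (B) for fixed small $k$, but simply uses that each individual $T_m$ is tight under $P_0$, so that $\sum_{m=1}^{K_\varepsilon} P_0(|T_m|\geq t) \to 0$ as $t \to \infty$.
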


\begin{remark}\label{GeneralTheoryRemark3}
Usually it happens that if the penalty does not satisfy all the
conditions of Definitions \ref{GeneralTheoryDef4} and
\ref{GeneralTheoryDef5}, then $T_S$ has the same distribution under
both alternative and null hypotheses and the test is inconsistent.
\end{remark}

As the first corollary, we have the following generalization of previously known null-hypothesis results for Neyman's smooth test. As was shown in \cite{dissertation} and \cite{langovoy_report_2009-007}, this corollary allows to build statistical tests for new problems that go far beyond the classic case of testing simple hypothesis of uniformity.

\begin{proposition}\label{GeneralTheoryTh9}
Let $\{T_k\}$ be a family of NT-statistics and $S$ a selection rule
for the family. Assume that $Y_1,$ $\ldots ,$ $Y_n$ are i.i.d. Let
$E\,l(Y_1)=0$ and assume that for every $k$ the vector $(l_1(Y_i),
\ldots ,l_k(Y_i))$ has the unit covariance matrix. Suppose that $\|
(l_1(Y_1), \ldots ,l_k(Y_1)) \|_k \leq M(k)$ a.e., where $\| \cdot
\|_k$ is the norm of the $k-$dimensional Euclidean space. Assume
$\pi (k,n) - \pi (1,n) \geq 2k$ for all $k \geq 2$ and

\begin{equation}\label{GeneralTheory39}
\lim_{n \rightarrow \infty}\, \frac{M(d(n))\, \pi (d(n),
n)}{\sqrt{n}}=\,0.
\end{equation}

\noindent Then $S=O_{P_0}(1)$ and $T_S=O_{P_0}(1).$
\end{proposition}

\noindent {\bf Example 1 (continued). Data-driven smooth tests under the null hypothesis.} As a simple corollary, we
derive the following slight improvement of Theorem 3.2
from \cite{MR1964450}.

\begin{corollary}\label{GeneralTheoryTh6}
Let $T_S$ be the Neyman's smooth data-driven test statistic for
the case of simple hypothesis of uniformity. Assume that $\pi
(k,n)- \pi (1,n) \geq 2k$ for all $k \geq 2$ and that for all $k
\leq d(n)$

$$\lim_{n \rightarrow \infty}\, \frac{d(n) \pi
(d(n),n)}{\sqrt{\,n}}\,=\,0.$$ Then $S=O_{P_0}(1)$ and
$T_S=O_{P_0}(1).$
\end{corollary}

\begin{proof}
It is enough to note that in this case $M(k)= \sqrt{(k-1)(k+3)}$
and apply Proposition \ref{GeneralTheoryTh9}.
\end{proof}

To prove consistency of a test based on some test statistic, usually
it is required to use inequalities for large deviations of this test
statistic. NT-statistics are no exception: in order to prove
consistency of an NT-test, one first has to establish large
deviations inequalities for NT-statistics under consideration. The model regularity assumptions and the model dimension growth rate
$d(n)$ will be strongly connected with this inequality.

In certain cases it can be desirable to have $d(n)$ growing at
a fast rate, even by the cost of restrictive regularity assumptions
that can arise from using a sharp large deviations inequality.
Sometimes, it is better to use a simple crude inequality that requires
only little additional assumptions, but allows only for a slow growth
of $d(n).$ One useful point of Definitions \ref{GeneralTheoryDef4} and
\ref{GeneralTheoryDef5} and Theorem \ref{GeneralTheoryTh5} below is
that one can be sure in advance that whatever large deviations
inequality he is able to prove, he will succeed in proving a
consistency theorem, provided that the chosen inequality satisfies
conditions $\langle B1 \rangle$ and $\langle B2 \rangle$. Moreover, once an inequality is chosen, the
rate $d(n)$ is obtained from Theorem \ref{GeneralTheoryTh5}.

\section{Consistency theorem}\label{GeneralTheorySection5}

Now we formulate the general consistency theorem for data-driven
NT-tests. We understand consistency of the test based on
$T_S$ in the sense that under the null hypothesis $T_S$ is bounded
in probability, while under fixed alternatives $T_S \rightarrow
\infty$ in probability.

\begin{theorem}\label{GeneralTheoryTh5}\textbf{(Consistency of data-driven NT-tests.)}
Let $\{T_k\}$ be a sequence of NT-statistics and $S$ be a
selection rule for it. Assume that the penalty in $S$ is of proper
weight. Assume that conditions (A), (\ref{GeneralTheory5}) and
(\ref{GeneralTheory6}) are satisfied and that $d(n)=o(r_n),$ $d(n)
\leq \min \{ u_n, m_n \}.$ Then the test based on $T_S$ is
consistent against any alternative distribution $P$
satisfying condition (C).
\end{theorem}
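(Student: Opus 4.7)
The plan is to deduce the theorem directly by combining Theorems~\ref{GeneralTheoryTh3} and \ref{GeneralTheoryTh4}, which respectively handle the behaviour of $T_S$ under fixed alternatives and under the null hypothesis. Consistency in the sense stated means $T_S = O_{P_0}(1)$ under $H_0$ and $T_S \rightarrow_P \infty$ under every fixed alternative $P$ satisfying condition (C). These are exactly the conclusions of the two preceding results, so the proof reduces to checking that the hypotheses of Theorem~\ref{GeneralTheoryTh5} supply everything both of them need.

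For the alternative side, I would simply verify that the hypotheses $\langle A \rangle$, $\langle C \rangle$, (\ref{GeneralTheory5}), (\ref{GeneralTheory6}) and $d(n)=o(r_n)$ required by Theorem~\ref{GeneralTheoryTh3} appear verbatim among the assumptions of Theorem~\ref{GeneralTheoryTh5}. Applying Theorem~\ref{GeneralTheoryTh3} then yields $T_S \rightarrow_P \infty$ under any fixed $P$ satisfying $\langle C \rangle$.

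For the null side, I would invoke Theorem~\ref{GeneralTheoryTh4}. By Definition~\ref{GeneralTheoryDef4}, the assumption that the penalty is of proper weight already furnishes the control sequences $s(k,n), t(k,n), u_n, m_n$ with the required monotonicity, positivity and divergence properties; the hypothesis $d(n) \leq \min\{u_n, m_n\}$ is stated explicitly in the theorem. The remaining ingredient required by Theorem~\ref{GeneralTheoryTh4}---that large deviations of the $T_k$ under $P_0$ are properly majorated in the sense of Definition~\ref{GeneralTheoryDef5}---is, as the preceding discussion of $\varphi$ makes clear, supplied by whichever large-deviations inequality one chooses to use for the specific NT-statistic at hand (Prohorov, Bernstein, Hoeffding, or otherwise). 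The whole point of Definitions~\ref{GeneralTheoryDef4} and \ref{GeneralTheoryDef5} was to provide an interface that accepts any such inequality. Theorem~\ref{GeneralTheoryTh4} then yields $S = O_{P_0}(1)$ and $T_S = O_{P_0}(1)$, completing the consistency claim.

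The main substantive content was already discharged in the two earlier theorems, so the only bookkeeping step---and the hardest part of the present argument---is verifying that a single control sequence $d(n)$ can simultaneously satisfy $d(n) \le \min\{u_n, m_n\}$, the ratio condition (\ref{GeneralTheory6}), and the rate $d(n) = o(r_n)$. Since all three are upper bounds on the growth of $d(n)$, any sequence dominated by their minimum will do, and Remark~\ref{GeneralTheoryRemark2} confirms that this joint regime is exactly the intended one; no new analytic estimate is needed at this stage.
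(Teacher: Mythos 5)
Your proposal is correct and takes essentially the same route as the paper, whose entire proof of this theorem is the single sentence that it ``follows from Theorems \ref{GeneralTheoryTh2}, \ref{GeneralTheoryTh3} and \ref{GeneralTheoryTh4} and our definition of consistency'' (your appeal to Theorem \ref{GeneralTheoryTh3} subsumes Theorem \ref{GeneralTheoryTh2}, which is used inside its proof). The one point worth noting is that the theorem's statement omits the ``properly majorated'' hypothesis that Theorem \ref{GeneralTheoryTh4} requires; you correctly flag that it must be supplied by some large-deviation inequality, which is if anything more careful than the original.
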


%(fixed)

Without a general consistency theorem, one has to perform the whole
proof of consistency of a data-driven test anew for every particular problem. This becomes
especially difficult in many semi- and nonparametric problems. Using the general
consistency Theorem \ref{GeneralTheoryTh5}, some type of
consistency result can be obtained for any data-driven NT-statistics, and types of detectable alternatives can be characterized explicitly. \\

\noindent {\bf Example 4 (continued). Data-driven tests for Gaussian linear processes.}
In the model of Birg\'{e} and Massart $\gamma (t)$ is the least
squares criterion and $\widehat{s}_m$ is the least squares estimator
of $s,$ which is in this case the maximum likelihood estimator.
Therefore $\| \widehat{s}_m \|^2$ is the Neyman score for testing
the hypothesis $s=0$ within this model. Risk-optimizing penalties
$pen$ proposed in \cite{MR1848946} satisfy the conditions of
Definition \ref{GeneralTheoryDef2} (after the change of notations
$pen(m) = \pi(m,n);$ for the explicit expressions of $pen'$s see the
original paper). Therefore, $\| \widehat{s}_{\widehat{m}} \|^2$ is,
in our terminology, the data-driven NT-statistic. As follows from
the consistency Theorem \ref{GeneralTheoryTh5}, $\|\widehat{s}_{\widehat{m}} \|^2$ can be used for testing $s=0$ and
has a good range of consistency, even though this particular penalty probably does not lead to adaptively optimal
testing, see \cite{baraud2003adaptive}. $\Box$

Many other examples of applications of the consistency theorem can be found in \cite{dissertation} and \cite{langovoy_report_2009-007}.

\begin{remark}\label{GNT-Remark17}
In general, it seems to be possible to use the idea of a score
process and some other technics from \cite{MR2281882} in order to
construct and analyze NT-statistics. This can be seen by
the fact that such applications as in Example 6 naturally
appear in both papers. The difference with the above paper would be
that we prefer to use test statistics of the form
(\ref{GeneralTheory4}) rather than integrals or suprema of score
processes.

In semi- and nonparametric models, generalized likelihood ratios
from \cite{1029.62042} and \cite{liliang}, as well as different
modifications of empirical likelihood, could also be a powerful tool
for constructing NT-statistics.
\end{remark}

\noindent {\bf Acknowledgments.} Author would like to thank Fadoua
Balabdaoui, Shota Gugushvili and Axel Munk for helpful discussions. Most of this research was done at Georg-August-University of G\"{o}ttingen, Germany.  \\

\bibliographystyle{unsrtnat}
\bibliography{NT_Bibliography}

\smallskip

\noindent {\bf Appendix.}

\begin{proof}(Proposition \ref{GeneralTheoryTh2}). By the law of large
numbers, as $n \rightarrow \infty\,,$

\begin{equation}\label{GeneralTheory7}
\frac{1}{n}\,\sum_{i=1}^{n} l_K (Y_i)\,\rightarrow_P \,C_P \neq 0.
\end{equation}

\noindent We get

\begin{eqnarray}\label{GeneralTheory8}
T_K & = & \biggr\{ \frac{1}{\sqrt{n}}\, \sum_{i=1}^{n}
\overrightarrow{l} (Y_i) \biggr\} \;L_k\; {\biggr\{
\frac{1}{\sqrt{n}}\, \sum_{i=1}^{n} \overrightarrow{l} (Y_i)
\biggr\} }^{T} \nonumber\\
    & \geq & \lambda^{(k)}_K {\biggr\|
\frac{1}{\sqrt{n}}\, \sum_{i=1}^{n} \overrightarrow{l} (Y_i)
\biggr\|}^2 \nonumber\\
    & \geq & \lambda^{(k)}_K \cdot
\frac{1}{n}\,{\biggr(\sum_{i=1}^{n} l_K (Y_i) \biggr)}^2 \,.
\end{eqnarray}

\noindent By (\ref{GeneralTheory7})

\begin{eqnarray}
T_K - \pi(K,n) & \geq & n\lambda^{(k)}_K \cdot {\Bigr(
\frac{1}{n}\,\sum_{i=1}^{n} l_K (Y_i) \Bigr)}^{\!\!\!2} - \pi(K,n)
\nonumber\\
               &  =   & n\lambda^{(k)}_K \bigr(C_K^2 + o_P (1) C_K \bigr)\,-
               \pi(K,n)\nonumber\\
               &  =   & \,n \lambda^{(k)}_K C_K^2 +\,o_P \bigr(n\lambda^{(k)}_K
\bigr)\,- \pi(K,n)\,,\nonumber
\end{eqnarray}

\noindent and, because $K$ and $C_K$ are constants determined by
fixed $P,$ condition (\ref{GeneralTheory6}) yields

\begin{equation}\label{GeneralTheory10}
T_K - \pi(K,n)\,\rightarrow_P \, \infty \quad\mbox{as}\quad n
\rightarrow\infty \,.
\end{equation}

\noindent On the other hand, by (\ref{GeneralTheory34})

$$\biggr( \frac{1}{\sqrt{n}}\,\sum_{i=1}^{n} l_1 (Y_i), \ldots ,
\frac{1}{\sqrt{n}}\,\sum_{i=1}^{n} l_{K-1}
(Y_i),\biggr)\,\rightarrow_P \, \mathcal{N}\,,$$ where $\mathcal{N}$
is a $(K-1)-$dimensional multivariate normal distribution with the
expectation vector equal to zero. This implies that $T_k=O_P (1)$
for all $k=1, 2, \ldots , K-1,$ because

$$T_k \leq \lambda^{(k)}_1 \, {\biggr\| \frac{1}{n}\,\sum_{i=1}^{n} l
(Y_i) \biggr\|}^2 \,=\,\lambda^{(k)}_1 O_P (1)\,=\,O_P (1)$$ and
$\lambda^{(1)}_1,$ $\lambda^{(2)}_1, \ldots ,$ $\lambda^{(K-1)}_1$
are constants and $K < \infty.$ Now by (\ref{GeneralTheory10})

$$\lim_{n \rightarrow \infty} \,\sum_{k=1}^{K-1} P\,\bigr(T_k - \pi (k,n) \geq T_K - \pi
(K,n)\bigr)\,=\,0\,.$$

\noindent But for $d(n)\geq K$

$$P(S < K) \leq \,\sum_{k=1}^{K-1} P\,\bigr(T_k - \pi (k,n) \geq T_K - \pi
(K,n)\bigr)\,,$$ and the theorem follows.
\end{proof}

Because of assumption $\langle A \rangle$ we can prove the
following lemma.

\begin{lemma}\label{GeneralTheoryLemma1}
$$P \biggr( \biggr| \frac{1}{n}\,\sum_{i=1}^{n} l_K
(Y_i) \biggr| \, \leq \sqrt{\frac{x}{\lambda_K n\,}} \,\biggr)
\,=\,O \biggr(\frac{1}{\,r_n}\biggr)\,.$$
\end{lemma}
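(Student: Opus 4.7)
The plan is to reduce the claimed bound to condition $\langle A \rangle$ via the triangle inequality. Let $\bar{l}_K := \frac{1}{n}\sum_{i=1}^{n} l_K(Y_i)$, and recall that under condition $\langle C \rangle$ we have $E_P\, l_K(Y) = C_P \neq 0$, while $K$ is a fixed integer and $\lambda_K = \lambda_K^{(k)}$ is a fixed positive constant (for the ambient model dimension under consideration). The key observation is that the event
\[
\Bigl\{\, |\bar{l}_K| \leq \sqrt{x/(\lambda_K n)}\,\Bigr\}
\]
forces $\bar{l}_K$ to be close to zero, whereas its mean is the nonzero constant $C_P$, so the centered sum must deviate from zero by nearly $|C_P|$.

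First, I would apply the reverse triangle inequality: on the event above,
\[
\bigl|\bar{l}_K - C_P\bigr| \;\geq\; |C_P| - |\bar{l}_K| \;\geq\; |C_P| - \sqrt{x/(\lambda_K n)}.
\]
Since $x$, $\lambda_K$ and $C_P$ are fixed constants, there exists $n_0 = n_0(x, \lambda_K, C_P)$ such that $\sqrt{x/(\lambda_K n)} \leq |C_P|/2$ for all $n \geq n_0$. Hence for such $n$ the event is contained in $\bigl\{|\bar{l}_K - C_P| \geq |C_P|/2\bigr\}$.

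Second, I would invoke condition $\langle A \rangle$ with the particular choice $y = |C_P|/2$: since condition $\langle A \rangle$ provides a rate $r_n$ that is uniform in $n$ (but may depend on $y$), applying it at this fixed $y$ yields
\[
P\Bigl(\,\bigl|\bar{l}_K - C_P\bigr| \geq |C_P|/2\,\Bigr) \;=\; O\!\left(\frac{1}{r_n}\right).
\]
Combining the two displays gives the claim for all $n \geq n_0$, and the finitely many remaining values of $n$ only affect the $O(\cdot)$-constant.

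There is no real obstacle here: the lemma is essentially a restatement of condition $\langle A \rangle$ under the non-degeneracy $\langle C \rangle$, and the only minor subtlety is keeping track of the dependence of the $O(1/r_n)$ constant on $x$ (absorbed into the implicit constant, since $x$ is fixed in the statement) and ensuring that the threshold $\sqrt{x/(\lambda_K n)}$ is eventually dominated by $|C_P|/2$. This is automatic because the threshold tends to zero as $n \to \infty$.
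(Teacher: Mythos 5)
Your proof is correct and follows essentially the same route as the paper: both reduce the event $\{|\bar{l}_K| \leq \sqrt{x/(\lambda_K n)}\}$ to the event $\{|\bar{l}_K - C_P| \geq |C_P|/2\}$ for $n$ large and then invoke condition $\langle A \rangle$ at the fixed level $y = |C_P|/2$. The only difference is cosmetic: your reverse triangle inequality handles both signs of $C_P$ at once, whereas the paper splits into the cases $C_K > 0$ and $C_K < 0$.
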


\begin{proof}
Denote $x_n := \sqrt{\frac{x}{\lambda_K n\,}}$, and remember that by
$\langle C \rangle$ we have $E_P\,l_K (Y_i) = C_K.$ Obviously, $x_n
\rightarrow 0$ as $n \rightarrow \infty .$ We have

$$P \biggr( \biggr| \frac{1}{n}\,\sum_{i=1}^{n} l_K
(Y_i) \biggr| \, \leq x_n \,\biggr) \,=\, P \biggr( -x_n \,\leq\,
\frac{1}{n}\,\sum_{i=1}^{n} l_K (Y_i)  \, \leq x_n \,\biggr)$$

$$=\,P \biggr( -x_n - C_K \,\leq\,
\frac{1}{n}\,\sum_{i=1}^{n} \bigr(l_K (Y_i) - E_P l_K (Y_i)\bigr)
\, \leq x_n - C_K \,\biggr) \,.$$

\noindent Here we get two cases. First, suppose $C_K > 0.$ Then we
continue as follows:

$$P \biggr( -x_n - C_K \,\leq\,
\frac{1}{n}\,\sum_{i=1}^{n} \bigr(l_K (Y_i) - E_P l_K (Y_i)\bigr)
\, \leq x_n - C_K \,\biggr) \quad\quad\quad\quad\quad\quad$$

\begin{eqnarray}
 \quad\quad\quad & \leq & \,P \biggr( \frac{1}{n}\,\sum_{i=1}^{n} \bigr(l_K (Y_i) - E_P
l_K (Y_i)\bigr) \, \leq x_n - C_K \,\biggr)\nonumber\\
 \quad\quad\quad & \leq & \,P \biggr( \biggr|\frac{1}{n}\,\sum_{i=1}^{n} \bigr(l_K (Y_i)
- E_P l_K (Y_i)\bigr)\biggr| \, \geq \bigr| x_n - C_K
\bigr|\biggr)\nonumber
\end{eqnarray}

\noindent (for all $n \geq$ some $n_K$)

\begin{eqnarray}
 \quad\quad\quad & \leq & \,P \biggr( \biggr|\frac{1}{n}\,\sum_{i=1}^{n} \bigr(l_K (Y_i) -
E_P l_K (Y_i)\bigr)\biggr| \, \geq \, \frac{C_K}{2} \biggr) \,
=\,O \biggr(\frac{1}{\,r_n}\biggr)\nonumber
\end{eqnarray}

\noindent by $\langle A \rangle,$ and so we proved the lemma for
the case $C_K > 0.$ In case if $C_K < 0,$ we write

$$\,P \biggr( -x_n - C_K \,\leq\,
\frac{1}{n}\,\sum_{i=1}^{n} \bigr(l_K (Y_i) - E_P l_K (Y_i)\bigr)
\, \leq x_n - C_K \,\biggr) \quad\quad\quad\quad\quad\quad$$

\begin{eqnarray}
 \quad\quad\quad & \leq & \,P \biggr( \frac{1}{n}\,\sum_{i=1}^{n} \bigr(l_K (Y_i) - E_P l_K
(Y_i)\bigr) \, \geq - x_n - C_K \,\biggr)\nonumber
\end{eqnarray}

\noindent and then we proceed analogously to the previous case.
\end{proof}

\begin{proof} (Theorem \ref{GeneralTheoryTh3}). Let $x>0.$
Since $T_j > T_K$ if $j > K$ and (\ref{GeneralTheory5}) holds, we
get by Proposition \ref{GeneralTheoryTh2} that

\begin{eqnarray}
P(T_S \leq x)\, & = & \,\sum_{j=K}^{d(n)} P(T_j \leq x, \,S=j) +
o(1) \nonumber\\
                & \leq & d(n) \, P(T_K \leq x) + o(1) \nonumber\\
                & \leq & d(n) \, P \biggr( \lambda_K \, \frac{1}{n}\,
{\biggr(\sum_{i=1}^{n} l_K (Y_i) \biggr)}^2 \, \leq x \biggr) +
o(1)\nonumber\\
                &  =   & d(n) \, P \biggr( \biggr| \frac{1}{n}\,\sum_{i=1}^{n} l_K
(Y_i) \biggr| \, \leq \sqrt{\frac{x}{\lambda_K n\,}} \,\biggr) +
o(1) \,.\nonumber
\end{eqnarray}

\noindent Now by Lemma \ref{GeneralTheoryLemma1} and
(\ref{GeneralTheory11}) we get

$$P(T_S \leq x) \,=\,O \biggr(\frac{d(n)}{\,r_n}\biggr)\,+o(1)\,=\,o(1)\,.$$
\end{proof}

\begin{proof}(Theorem \ref{GeneralTheoryTh4}).
If $S \geq K,$ then $T_k - T_1 \geq \pi (k,n) - \pi (1,n)$ for
some $K \leq k \leq d(n)$ and so, equivalently,

\[
\biggr\{ \frac{1}{\sqrt{n}}\, \sum_{i=1}^{n} l (Y_i) \biggr\}
\;L\; {\biggr\{ \frac{1}{\sqrt{n}}\, \sum_{i=1}^{n} l (Y_i)
\biggr\}
}^{T}\quad\quad\quad\quad\quad\quad\quad\quad\quad\quad\quad\quad\quad\quad\quad
\]
\vskip -0.4cm

\begin{equation}\label{GeneralTheory17}
- \,{\biggr\{ \frac{1}{\sqrt{n}}\, \sum_{i=1}^{n} l_1 (Y_i)
\biggr\} }^{2} {\{E_0 {[l_1 (Y)]}^{T} l_1 (Y) \}}^{-1} \geq \,\pi
(k,n) - \pi (1,n)
\end{equation}

\noindent for some $K \leq k \leq d(n),$ where $l = (l_1, l_2,
\ldots , l_k).$ We can rewrite (\ref{GeneralTheory17}) in terms of
the notation (\ref{GeneralTheory12})-(\ref{GeneralTheory15}) as
follows:

\begin{equation}\label{GeneralTheory18}
(\sqrt{\,n}\,\,\,\overline{l}_1, \ldots ,
\sqrt{\,n}\,\,\,\overline{l}_k)\, L\,
{(\sqrt{\,n}\,\,\,\overline{l}_1, \ldots ,
\sqrt{\,n}\,\,\,\overline{l}_k)}^{T}\quad\quad\quad\quad\quad\quad\quad\quad\quad\quad
\end{equation}

\vskip -0.6cm

$$=\,n\, (\overline{l}_1, \ldots ,
\overline{l}_k)\, L\, {(\overline{l}_1,
 \ldots , \overline{l}_k)}^{T} \,\geq
\frac{n\,{\overline{l}_1}^2}{E_0 \,{l_1}^2} + \bigr(\pi (k,n) -
\pi (1,n) \bigr),$$ for some $K \leq k \leq d(n).$ Denote $\Delta
(k,n) := \pi (k,n) - \pi (1,n);$ then with the help of
(\ref{GeneralTheory15}) we rewrite (\ref{GeneralTheory18}) as

\begin{equation}\label{GeneralTheory19}
n\,Q_k (\overline{l}) \, \geq \, \Delta (k,n) +
\frac{n\,{\overline{l}_1}^2}{E_0 \,{l_1}^2}\,,
\end{equation}

\noindent for some $K \leq k \leq d(n).$ Clearly,

\begin{eqnarray}
 P_0(S \geq K) & \leq & P_0 \bigr((\ref{GeneralTheory17}) \,\,\mbox{holds for some}\,\,K \leq k \leq
 d(n)\bigr)\nonumber\\
               &  =   & P_0 \bigr((\ref{GeneralTheory19}) \,\,\mbox{holds for some}\,\,K \leq k \leq
 d(n)\bigr)\nonumber\\
               & \leq & P_0 \bigr( n\,Q_k (\overline{l}) \, \geq \, \Delta (k,n)\,\,\mbox{for some}\,\,
 K \leq k \leq d(n)\bigr)\,.\nonumber
\end{eqnarray}

\noindent But now by condition $\langle B2 \rangle$ we have

\begin{eqnarray}
 P_0(S \geq K) & \leq & P_0\bigr( n\,Q_k (\overline{l}) \, \geq \,
 \Delta (k,n)\,\,\mbox{for some}\,\, K \leq k \leq
 d(n)\bigr)\nonumber\\
               & \leq & \sum_{k=K}^{d(n)} P_0\biggr( n\, Q_k (\overline{l}) \, \geq \,
 \Delta (k,n) \biggr)\nonumber\\
               & \leq & \sum_{k=K}^{d(n)} \varphi \bigr(k; \Delta (k,n)\bigr)\,,
\end{eqnarray}

\noindent if only $d(n) \,\leq\, \min \{u_n, m_n\}$ (see Definition
\ref{GeneralTheoryDef4}). Thus, because of the condition $\langle B2 \rangle$, for
each $\varepsilon > 0$ there exists $K = K_{\varepsilon}$ such that
for all $n > n(\varepsilon)$ we have $P_0(S \geq K) \leq \varepsilon
,$ i.e. $S=O_{P_0}(1).$

Now, by standard inequalities, it is possible to show that
$T_S=O_{P_0}(1).$ Let us write for an arbitrary real $t > 0$

\begin{eqnarray}\label{SimpleDeconvolution12}
 P_0 (|T_S| \geq t) & = & \sum_{m=1}^{K_{\varepsilon}} P_0 (|T_m| \geq t; \,
             S=m)\nonumber\\
                            &   & \quad + \sum_{m=K_{\varepsilon}+1}^{d(n)} P_0 (|T_m| \geq t; \,
             S=m)\nonumber\\
                            & \leq & \sum_{m=1}^{K_{\varepsilon}} P_0 (|T_m| \geq
                            t) + \sum_{m=K_{\varepsilon}+1}^{d(n)} P_0
                            (S=m)\nonumber\\
                            & = & \sum_{m=1}^{K_{\varepsilon}} P_0 (|T_m| \geq
                            t) + P_0 (S \geq
                            K_{\varepsilon}+1)\nonumber\\
                            & \leq & \sum_{m=1}^{K_{\varepsilon}} P_0 (|T_m| \geq
                            t) + \varepsilon\nonumber\\
                            & =: & R(t) + \varepsilon .\nonumber
\end{eqnarray}

\noindent For $t \rightarrow \infty$ we have $P_0 (|T_m| \geq t)
\rightarrow 0$ for every fixed $m,$ so $R(t) \rightarrow 0$ as $t
\rightarrow \infty .$ Now it follows that for arbitrary
$\varepsilon > 0$

\[
\overline{\lim_{t \rightarrow \infty}} P_0 (|T_S| \geq t) \leq
\varepsilon ,
\]
\noindent therefore
\[
\overline{\lim_{t \rightarrow \infty}} P_0 (|T_S| \geq t) = 0
\]
\noindent and
\[
\lim_{t \rightarrow \infty} P_0 (|T_S| \geq t) = 0 .
\]
This completes the proof.
\end{proof}

\begin{proof}(Theorem \ref{GeneralTheoryTh5}). Follows from Theorems \ref{GeneralTheoryTh3} and
\ref{GeneralTheoryTh4} and our definition of consistency.
\end{proof}

In the next proof we will need the following theorem from
\cite{MR0312546}.

\begin{theorem}\label{Prohorov}
Let $Z_1,$ $\ldots ,$ $Z_n$ be i.i.d. random vectors with values
in $\mathbb{R}^k.$ Let $E Z_i =0$ and let the covariance matrix of
$Z_i$ be equal to the identity matrix. Assume $\|Z_1\|_k \leq L$
a.e. Then, for $2k \leq y^2 \leq n L^{-2},$ we have

\[
Pr \biggr( \| n^{-1/2} \sum_{i=1}^{n} Z_i \|_k \geq y \biggr) \leq
\frac{150210}{\Gamma (k/2)}\,
{\biggr(\frac{y^2}{2}\biggr)}^{\frac{k-1}{2}} \exp \biggr\{ -
\frac{y^2}{2} \biggr(1- \eta_n \biggr) \biggr\}\,,
\]

\noindent where $0 \leq \eta_n \leq L y n^{-1/2}.$
\end{theorem}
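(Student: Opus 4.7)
The plan is to derive this multivariate Prohorov/Bernstein-type inequality by reducing to scalar large-deviation bounds along each direction on the unit sphere $S^{k-1}$, and then accounting for the dimensionality via the spherical surface area, which naturally produces the factor $\Gamma(k/2)^{-1}$.

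First, I would fix an arbitrary unit vector $u \in S^{k-1}$ and analyze the scalar random variable $W_i(u) := \langle u, Z_i \rangle$. Since $E Z_i = 0$, $\mathrm{Cov}(Z_i) = I$, and $\|Z_i\|_k \leq L$ a.s., Cauchy-Schwarz gives $|W_i(u)| \leq L$, while $E W_i(u) = 0$ and $E W_i(u)^2 = u^T u = 1$. Applying Taylor's expansion to the moment generating function and bounding $|W_i(u)|^j \leq L^{j-2} W_i(u)^2$ for $j \geq 2$, I obtain
\[
E \exp\!\bigr(t W_i(u)\bigr) \leq \exp\!\Bigr(\tfrac{t^2}{2}(1+\delta(t,L))\Bigr),
\]
where $\delta(t,L) = O(|t|L)$ uniformly in $u$ for $|t|L \leq 1$. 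By independence, the same bound scales up to $\exp\bigr(\tfrac{n t^2}{2}(1+\delta)\bigr)$ for $\langle u, \sum_{i=1}^n Z_i\rangle$.

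Second, I would apply the exponential Markov (Chernoff) inequality and optimize in $t$. Choosing $t = y/\sqrt{n}$ yields, for every fixed $u \in S^{k-1}$,
\[
\Pr\!\Bigr(\langle u, n^{-1/2} \textstyle\sum_{i=1}^n Z_i \rangle \geq y\Bigr) \leq \exp\!\Bigr\{ -\tfrac{y^2}{2}(1-\eta_n) \Bigr\},
\]
with $\eta_n \leq L y n^{-1/2}$, which is exactly the shape of the one-sided bound asked for, but only along a single direction. The constraint $y^2 \leq n L^{-2}$ guarantees $|t|L \leq 1$, so the Taylor remainder remains controlled, while $2k \leq y^2$ is the regime where the large-deviation regime dominates the Gaussian prefactor.

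Third, to pass from directional bounds to a bound on the Euclidean norm $\|n^{-1/2} \sum Z_i\|_k$, I would integrate the one-dimensional estimate over the unit sphere. The identity $\|x\|_k = \sup_{u \in S^{k-1}} \langle u, x\rangle$ combined with the fact that a spherical cap of angular radius $\varepsilon$ has normalized surface measure on the order of $\varepsilon^{k-1}/\Gamma(k/2)$ (up to explicit numerical constants coming from $2\pi^{k/2}$) allows one to write, after a smoothing/covering argument,
\[
\Pr(\|n^{-1/2} \textstyle\sum Z_i\|_k \geq y) \leq c_k \int_{S^{k-1}} \Pr\!\bigr( \langle u, n^{-1/2}\sum Z_i\rangle \geq y(1-O(1/y))\bigr)\, d\sigma(u),
\]
and the surface area factor $2\pi^{k/2}/\Gamma(k/2)$, multiplied by the Gaussian-type radial factor $(y^2/2)^{(k-1)/2}$ coming from converting the spherical measure to a radial bound, delivers precisely the prefactor $\frac{C}{\Gamma(k/2)}(y^2/2)^{(k-1)/2}$.

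The main obstacle is the careful bookkeeping required in this third step to produce the explicit constant $150210$ rather than a dimension-dependent one, and to keep the Gaussian-type exponent unspoiled except for the correction $\eta_n \leq L y n^{-1/2}$. Most of the work lies in balancing the resolution of the spherical net against the slackness allowed in the directional Chernoff bound; conceptually, however, the proof is a Chernoff-plus-covering argument, and the exact numerical constants come from Yurinskii's original sharpening, which is the source cited as \cite{MR0312546}.
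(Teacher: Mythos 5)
A preliminary remark: the paper itself contains no proof of this statement — Theorem \ref{Prohorov} is quoted from \cite{MR0312546} precisely so that it can be plugged into the proof of Theorem \ref{GeneralTheoryTh9} — so your attempt must be judged against the classical argument for Prohorov's inequality. Your first two steps are sound: under $|\langle u, Z_i\rangle| \leq L$, the Bernstein/Chernoff bound with $t = y/\sqrt{n}$ does give $\exp\{-\frac{y^2}{2}(1-\eta_n)\}$ with $\eta_n \leq Ly n^{-1/2}$, and $y^2 \leq nL^{-2}$ is exactly what keeps $tL \leq 1$. The genuine gap is in step three. The averaging inequality you invoke forces $c_k$ to be the reciprocal of the normalized measure of a spherical cap: if the threshold is relaxed to $y(1-\delta)$, the relevant cap $\{u : \langle u, x/\|x\|\rangle \geq 1-\delta\}$ has normalized measure of order $\sqrt{k}^{\,-1}(2\delta)^{(k-1)/2}$ (up to ratios $\Gamma(k/2)/\Gamma((k+1)/2)$), so the best bound this route can ever produce is of the form $C\sqrt{k}\,(2\delta)^{-(k-1)/2}\exp\{-\frac{y^2(1-\delta)^2}{2}(1-\eta_n)\}$, optimized at $\delta \asymp y^{-2}$, i.e. a prefactor of order $\sqrt{k}\,(y^2/2)^{(k-1)/2}$. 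That exceeds the claimed prefactor $\frac{150210}{\Gamma(k/2)}(y^2/2)^{(k-1)/2}$ by a factor of order $\sqrt{k}\,\Gamma(k/2) \approx (k/2e)^{k/2}$, and no tuning of the net resolution against the Chernoff slack can remove it: each directional tail is genuinely of size $e^{-y^2/2}$ on the relevant scale and the sphere has total mass one, so any argument passing through $\sup_u \Pr(\langle u,\cdot\rangle \geq y')$ is bounded below by the cap computation. A telling symptom is that your sketch never actually uses the restriction $2k \leq y^2$, whereas in the true statement it is essential.

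The factor $1/\Gamma(k/2)$ is the chi-tail normalization — for $N \sim \mathcal{N}(0, I_k)$ and $y^2 \geq 2k$ one has $\Pr(\|N\|_k \geq y) \leq \frac{C}{\Gamma(k/2)}(y^2/2)^{(k-1)/2}e^{-y^2/2}$ — and it comes from the radial density $r^{k-1}e^{-r^2/2}\bigl/\bigl(2^{k/2-1}\Gamma(k/2)\bigr)$, information that the supremum over directions discards. The repair is to run the Chernoff bound \emph{radially} rather than directionally: bound $\Pr\bigl(\|n^{-1/2}\sum_i Z_i\|_k \geq y\bigr) \leq e^{-ty}\, E\exp\bigl(t\|n^{-1/2}\sum_i Z_i\|_k\bigr)$, compare the moment generating function of the norm with its Gaussian counterpart $E e^{t\|N\|_k} = \frac{2^{1-k/2}}{\Gamma(k/2)}\int_0^\infty r^{k-1}e^{tr - r^2/2}\,dr$ with Bernstein-type corrections from $\|Z_1\|_k \leq L$ (this is where $\eta_n \leq Lyn^{-1/2}$ and $y^2 \leq nL^{-2}$ enter), and optimize at $t \approx y$; the condition $2k \leq y^2$ is exactly what makes the saddle point dominate the polynomial factor $r^{k-1}$ and yields $\frac{C}{\Gamma(k/2)}(y^2/2)^{(k-1)/2}e^{-\frac{y^2}{2}(1-\eta_n)}$ with a dimension-free $C$. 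This is the structure of the argument in \cite{MR0312546}; deferring the constant $150210$ to that source, as your last paragraph does, concedes precisely the step your covering scheme cannot supply.
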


\begin{proof}(Proposition \ref{GeneralTheoryTh9})
Here $T_S$ is an NT-statistic with $L_k= E_{k \times
k}$ and $\lambda^{(k)}_1 = \ldots = \lambda^{(k)}_k =1.$ Therefore
Theorem \ref{GeneralTheoryTh4} is applicable. Put $s(k,n)=\sqrt{\,2k\,},$ $t(k,n)= 0.5 \sqrt{\,n\,}\,
M(k)^{-1}.$ The Prohorov inequality is applicable if $M(k)\, \pi(k,n) \leq \sqrt{\,n}$ and $M^2(k)\, \pi (k,n) \leq n$ for all $k
\leq d(n);$ therefore assumption (\ref{GeneralTheory39})
guarantees that the Prohorov inequality is applicable and,
moreover, that $\langle B2 \rangle$ holds with

\begin{equation}\label{GeneralTheory22}
\varphi (k; y)\,=\, \frac{150210}{\Gamma (k/2)}\,
{\biggr(\frac{y^2}{2}\biggr)}^{\frac{k-1}{2}} \exp \biggr\{ -
\frac{y^2}{4} \biggr\}\,.
\end{equation}

\noindent Since $\varphi$ is exponentially decreasing in $y$ under
(\ref{GeneralTheory39}), it is a matter of simple calculations to
prove that $\langle B1 \rangle$ is satisfied with $u_n = d(n)$ for any sequence
$\{ d(n) \}$ such that (\ref{GeneralTheory39}) holds.

\end{proof}

\end{document}